\documentclass[12pt,a4paper]{amsart}
\usepackage{amssymb,color,currfile,pdfpages}

\usepackage[english]{babel}
\usepackage{verbatim,here}
\usepackage[T1]{fontenc}
\usepackage{epstopdf}
\usepackage{floatflt,graphicx}
\usepackage{color,xcolor}
\usepackage{enumerate}
\usepackage{animate}
\usepackage{a4wide}
\usepackage{amsmath,amssymb}
\usepackage{amsthm}
\usepackage{float}

\usepackage{orcidlink}
\usepackage{hyperref}

\usepackage{tikz}
\usetikzlibrary{arrows.meta,3d}
\usepackage{subcaption}
\usepackage[english]{babel}
\usepackage{verbatim,here}
\usepackage[T1]{fontenc}

\usepackage{epstopdf}
\usepackage{floatflt,graphicx,framed}
\usepackage{color,xcolor}
\usepackage{enumerate}
\usepackage{animate}
\usepackage{a4wide}

\usepackage{float}

\newcounter{minutes}
\divide\time by 60
\newcounter{hours}
\multiply\time by 60 \addtocounter{minutes}{-\time}

\dedicatory{}
\commby{}
\theoremstyle{plain}
\newtheorem{thm}[equation]{Theorem}
\newtheorem{cor}[equation]{Corollary}
\newtheorem{lem}[equation]{Lemma}

\newtheorem{prop}[equation]{Proposition}

\theoremstyle{definition}

\theoremstyle{remark}

\newtheorem{nonsec}[equation]{}

\numberwithin{equation}{section}

\newcommand{\beq}{\begin{equation}}
\newcommand{\eeq}{\end{equation}}
\newcommand{\ben}{\begin{enumerate}}
\newcommand{\een}{\end{enumerate}}
\newcommand{\bequu}{\begin{eqnarray*}}
\newcommand{\eequu}{\end{eqnarray*}}
\newcommand{\bequ}{\begin{eqnarray}}
\newcommand{\eequ}{\end{eqnarray}}

\begin{document}
\thispagestyle{empty}
\def\thefootnote{}

\title[The Ptolemy-Alhazen problem]{The Ptolemy-Alhazen problem with source
at infinity}

\author[M. Fujimura]{Masayo Fujimura \orcidlink{0000-0002-5837-8167}}
\address{Department of Mathematics,
National Defense Academy of Japan, Yokosuka, Japan\newline
\href{https://orcid.org/0000-0002-5837-8167}{{\tt https://orcid.org/0000-0002-5837-8167}}
}
\email{masayo@nda.ac.jp}%
\author[M. Vuorinen]{Matti Vuorinen \orcidlink{0000-0002-1734-8228}}
\address{Department of Mathematics and Statistics,
University of Turku, Finland\newline
\href{https://orcid.org/0000-0002-1734-8228}{{\tt https://orcid.org/0000-0002-1734-8228}}
}
\email{vuorinen@utu.fi}
\date{}

\begin{abstract}
We study the well-known Ptolemy-Alhazen problem on reflection of light
at the surface of a spherical mirror in the case when the source of light
is very far from the mirror.
\end{abstract}

\keywords{Ptolemy-Alhazen problem, reflection of light,  hyperbolic geometry, triangular ratio metric}
\subjclass[2010]{51M09, 51M15}

\maketitle

\footnotetext{\texttt{{\tiny File:~\jobname .tex, printed: \number\year-%
\number\month-\number\day, \thehours.\ifnum\theminutes<10{0}\fi\theminutes}}}
\makeatletter

\makeatother



\section{Introduction}

The well-known
Ptolemy--Alhazen problem reads \cite{s},\cite[p.1010]{gbl}: "Given a light source and a spherical mirror, find the point on the mirror where the light 
will be reflected to the eye of an observer." The long history of this problem is described in \cite{s,fmv2,w}.  The problem is of current interest for instance in the design of
computer vision of drones, in the study of GPS signal transmission,  in many scattering
problems of electromagnetic signals (e.g. radar signals), acoustics, and in various ray-tracing problems  of
astrophysics \cite{atr,dsggh,le, m, dg1,dg2}.  We have proved that this problem is equivalent
to the question of finding a formula for the so called triangular ratio metric of geometric function theory
and also studied the problem in the case of quartic mirrors \cite{fhmv,fmv1, fmv2,hkv}.

Since the publication of our work \cite{fhmv}, our solution has been implemented and validated in astrophysics applications in \cite{dg1, dg2} and its performance compared to other methods.  V.H.  Almeida Jr., one of the authors of \cite{dg1,dg2},
has sent us an email \cite{d} where he pointed out the need for the solution of the problem in the case of the source at
infinity. A few days after the receipt of his email we informed him about the solution of this
problem and gave the solution in Theorem \ref{claim1} below. The solution
is given by a fourth degree polynomial equation which one can solve with symbolic
computation.

 \section{Preliminaries}

For the reader's benefit we recall here some of our earlier
work \cite{fhmv, hkv}.

The triangular
ratio metric $s_G$ of a given domain $G \subset {\mathbb{R}}^2$ is defined as
follows for $z_1,z_2 \in G$ \cite{hkv}
\begin{equation*}  
  s_G(z_1,z_2)= \sup_{z \in \partial G} \frac{|z_1-z_2|}{|z_1-z|+|z-z_2|} \, .
\end{equation*}
By compactness, this supremum is attained at some point $w \in \partial
G\,.$ If $G$ is convex, it is simple to see that $w$ is the point of
contact of the boundary with an ellipse, with foci $z_1,z_2\,,$ contained in
$G\,.$ Now for the case $G= \mathbb{B}^2$ and $z_1,z_2 \in \mathbb{B}^2\,$, if the extremal point is $w
\in \partial \mathbb{B}^2\,,$  then
\begin{equation} \label{extrpt}
  s_{\mathbb{B}^2}(z_1, z_2) = \frac{|z_1- z_2|}{ |z_1- w| + |z_2-w| } \,.
\end{equation}
This definition also makes sense for the domain $\mathbb{R}^2 \setminus \overline{\mathbb{B}}^2,$ we call the cases 
$\mathbb{B}^2$ and $\mathbb{R}^2 \setminus \overline{\mathbb{B}}^2$ {\it the interior and exterior problems,} resp. 
The connection between the triangular ratio
distance
and the Ptolemy-Alhazen interior problem is now clear: if the unit circle is a reflecting mirror and $z_1$ is the light source and $z_2$ is the observer, then the point $w$ is the point of reflection.
We also see that an ellipse with foci $z_1$ and $z_2$ through
the point $w$ is internally tangent to the unit circle.

\begin{nonsec}{\bf Ellipse eccentricity.}\label{excent}
Let $z_1,z_2 \in \mathbb{B}^2 $ and $w \in \partial \mathbb{B}^2$ be the extremal point of \eqref{extrpt}. Then the focal sum of the ellipse is
\[
c=|z_1-w|+|z_2-w|=|z_1-z_2|/s_{\mathbb{B}^2}(z_1,z_2) \,.
\]

Let $\lambda, \sigma$ be the lengths of the  major and minor semiaxes of the ellipse. Then
\[
\lambda= c/2\,,\quad \sigma= \frac{1}{2} \sqrt{c^2 - |z_1-z_2|^2}
\]
and the eccentricity equals
\[
\sqrt{\frac{\lambda^2- \sigma^2}{\lambda^2}}= \sqrt{1- \left( \frac{\sigma}{\lambda}\right)^2} = s_{\mathbb{B}^2}(z_1,z_2) \,.
\]
In conclusion, $ s_{\mathbb{B}^2}(z_1,z_2)$ is equal to the eccentricity of the maximal ellipse in $ {\mathbb{B}^2}$ with foci $z_1,z_2\,.$
\begin{figure}[H] %
\centerline{
\includegraphics[width=0.4\linewidth]{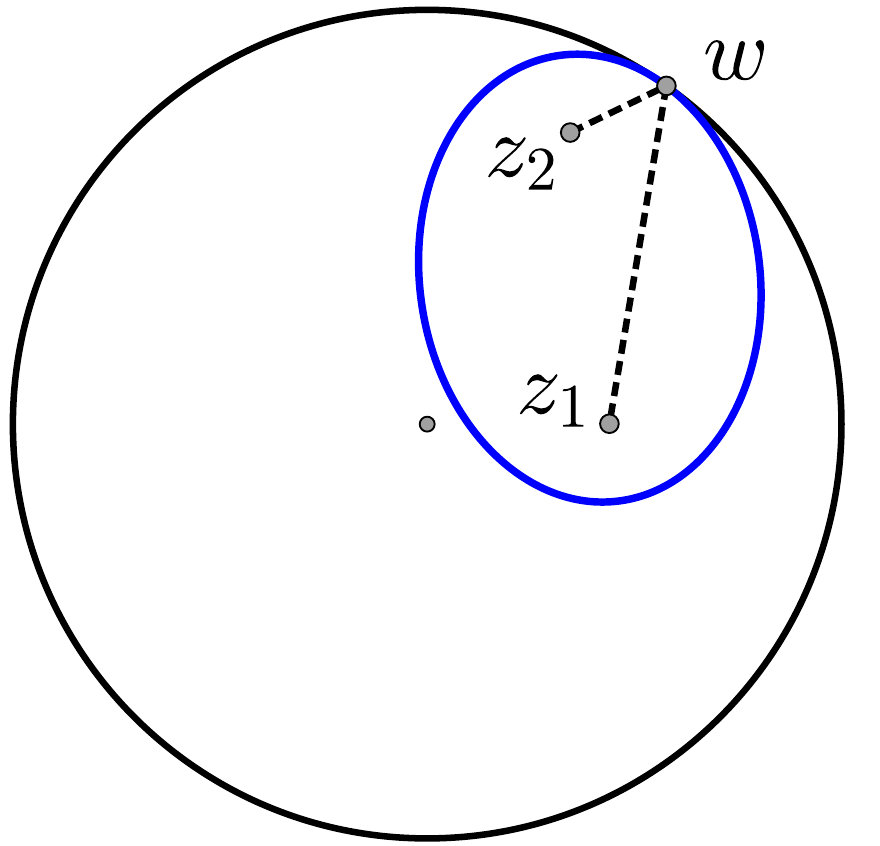}
}
\caption{If the unit circle is a mirror, then a light signal from the source $z_1$ to the observer at $z_2$ is reflected at the point $w.$ Moreover, $ s_{\mathbb{B}^2}(z_1,z_2)$ is equal to the eccentricity of the maximal ellipse.} \label{figEllip}
\end{figure}

\end{nonsec}

The main result of \cite{fhmv},
Theorem \ref{thm:Fuji}, gives a polynomial equation of degree four that yields the
reflection point on the unit circle.
The solution of the quartic equation is given by Cardano's formula which is
available in standard symbolic computation software.  We also study the Ptolemy-Alhazen exterior
problem for $z_1, z_2 \in \mathbb{R}^2 \setminus \overline{\mathbb{B}}^2,$ and in this case the formula for the reflection point, if such a point exists,
is the same as for the interior problem \cite[Rmk 2.7]{fhmv}. 

\begin{thm}\label{thm:Fuji}
For $z_1, z_2 \in {\mathbb{B}}^2,$  the point $w $ in \eqref{extrpt} is given as a
  solution of the  equation
  \begin{equation}  \label{eq:equation}
     \overline{z_1}\overline{z_2}w^4-(\overline{z_1}+\overline{z_2})w^3
     +(z_1+z_2)w-z_1z_2=0\,.
  \end{equation}
\end{thm}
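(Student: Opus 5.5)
The plan is to treat \eqref{extrpt} as a constrained extremal problem on the unit circle and to turn the first-order condition into a polynomial identity in $w$. Since $|z_1-z_2|$ does not depend on $w$, the point $w$ in \eqref{extrpt} minimizes
\[
f(w)=|z_1-w|+|z_2-w|
\]
over $|w|=1$. We may assume $z_1\neq z_2$ and $w\ne z_1,z_2$; the latter holds automatically because $z_1,z_2\in\mathbb{B}^2$. Parametrize $w=e^{i\theta}$. Then $\frac{d}{d\theta}|z_k-w|=\mathrm{Re}\bigl(\overline{(w-z_k)}\,iw\bigr)/|w-z_k|$, so criticality of $f$ reads
\[
\mathrm{Im}\Bigl(\frac{\overline{w}\,(w-z_1)}{|w-z_1|}+\frac{\overline{w}\,(w-z_2)}{|w-z_2|}\Bigr)=0 .
\]
Geometrically this is the law of reflection: the unit vectors from $w$ toward $z_1$ and toward $z_2$ make equal angles with the normal direction $w$. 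Equivalently, the normal bisects the angle $z_1 w z_2$.

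Next I recast the angle condition in a form free of moduli. The bisector condition says that the arguments of $(z_1-w)/w$ and $(z_2-w)/w$ are negatives of each other modulo $\pi$, i.e.
\[
\frac{(z_1-w)(z_2-w)}{w^2}\in\mathbb{R}.
\]
I would check this equivalence directly. The product of the two unit vectors $u_k=\overline{w}(z_k-w)/|z_k-w|$ is real exactly when $\arg u_1=-\arg u_2 \pmod \pi$. If $\arg u_1=-\arg u_2$, the two $\mathrm{Im}$ terms in the critical equation cancel. The other branch, $\arg u_1=\pi-\arg u_2$, also makes the product real but corresponds to the external bisector; it is the spurious family that the quartic will also contain.

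Writing reality as $q=\overline q$ and using $\overline w=1/w$ gives
\[
\frac{(z_1-w)(z_2-w)}{w^2}=w^2\Bigl(\overline{z_1}-\frac1w\Bigr)\Bigl(\overline{z_2}-\frac1w\Bigr)=(\overline{z_1}w-1)(\overline{z_2}w-1).
\]
Multiplying by $w^2$ and expanding produces
\[
\overline{z_1}\overline{z_2}w^4-(\overline{z_1}+\overline{z_2})w^3+w^2=z_1z_2-(z_1+z_2)w+w^2 .
\]
After cancelling $w^2$, this is exactly \eqref{eq:equation}.

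The main obstacle is the rigorous justification that the extremal $w$ satisfies the bisector condition in the algebraic form above. In particular, one must ensure that no degenerate case, such as $w$ collinear with $z_1$ and $z_2$, breaks the equivalence. For this I would use the smoothness of $f$ on the circle away from $z_1,z_2$, so that the minimizer is a critical point, and then argue via the real-product criterion, which is symmetric and also covers the collinear cases. The extraneous roots of \eqref{eq:equation} (external bisector, maxima, or roots off the circle) do not matter, since the theorem only asserts that $w$ is \emph{a} solution.
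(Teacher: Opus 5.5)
Your overall route matches the paper's. This theorem is only quoted here from \cite{fhmv}, but the paper's proof of the analogous Theorem~\ref{claim1} is exactly your argument: write the law of reflection as ``a certain quotient is real'', equate it with its conjugate, and clear denominators using $w\overline{w}=1$. Your identity $(z_1-w)(z_2-w)=w^2(\overline{z_1}w-1)(\overline{z_2}w-1)$ is correct and yields \eqref{eq:equation}. Deriving the reflection law from the first-order condition for $|z_1-w|+|z_2-w|$ on the circle, instead of from the tangency of the maximal ellipse, is a legitimate variant.

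There is, however, a gap in the one step that actually needs proof. You need ``critical point $\Rightarrow$ $(z_1-w)(z_2-w)/w^2\in\mathbb{R}$''. What you verify is the converse: if $\arg u_1=-\arg u_2$, the imaginary parts cancel. Your claim that the real-product criterion ``covers the collinear cases'' is also not correct as stated. With $u_k=\overline{w}(z_k-w)/|z_k-w|$, your critical equation reads $\mathrm{Im}\,u_1+\mathrm{Im}\,u_2=0$. Together with $|u_1|=|u_2|=1$ this gives $u_1=\overline{u_2}$ or $u_1=-u_2$, and in the second case $u_1u_2=-u_2^2$ is in general not real.

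So you must exclude $u_1=-u_2$. It would put $w$ on the segment $[z_1,z_2]\subset\mathbb{B}^2$, contradicting $|w|=1$. Equivalently, $\mathrm{Re}\,u_k<0$ because $\mathrm{Re}(\overline{w}z_k)<1$. Hence $u_1u_2=1$, i.e.\ $(z_1-w)(z_2-w)/w^2=|z_1-w|\,|z_2-w|>0$, and your computation then finishes the proof.

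The same sign observation shows that your ``external bisector'' branch $u_1=-\overline{u_2}$ never occurs on the circle when $z_1,z_2$ are interior. The extraneous unit-circle roots of \eqref{eq:equation} are instead the other critical points of $f$, such as its maximizer, which satisfy the same internal bisector condition.
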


\begin{nonsec}{\bf Remark.} The roots of equation
 \eqref{eq:equation} have been studied in detail
in \cite{fhmv}.  The root $w$ at the extremal point  for  \eqref{extrpt} is called the minimizing root. For instance, it is known that the minimizing root need not be unique and that some of the roots of  \eqref{eq:equation} may be off $ \partial {\mathbb{B}^2}.$
A computer algorithm for finding
the minimizing root  and a formula for $ s_{\mathbb{B}^2}(z_1, z_2)$ based on this root is given in   \cite{fhmv}.
\end{nonsec}

\begin{cor}\label{cor:sDformula}
  For $z_1, z_2 \in \mathbb{B}^2,$ we have
  \begin{equation*}
    s_{\mathbb{B}^2}(z_1, z_2) = \frac{|z_1- z_2|}{|z_1-w|+ |z_2-w| }
  \end{equation*}
  where $w \in \partial {\mathbb{B}^2 } $ is the minimizing root of
  \eqref{eq:equation}\,.
\end{cor}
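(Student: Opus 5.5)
The corollary follows almost immediately by combining \eqref{extrpt} with Theorem \ref{thm:Fuji}. The plan is to establish, in order: the supremum in the definition of $s_{\mathbb{B}^2}$ is attained on $\partial\mathbb{B}^2$; every extremal point is a root of \eqref{eq:equation}; and among the unimodular roots, the extremal ones are exactly those minimizing the focal sum.

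First, for fixed $z_1\neq z_2$ in $\mathbb{B}^2$, the function
\[
f(w)=|z_1-w|+|z_2-w|
\]
is continuous on the compact set $\partial\mathbb{B}^2$. Hence the supremum of $|z_1-z_2|/f(w)$ is a maximum, attained at every minimizer $w_0$ of $f$ on the unit circle. This gives \eqref{extrpt} with $w=w_0$. The degenerate case $z_1=z_2$ is trivial, since both sides vanish for any $w$.

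Second, by Theorem \ref{thm:Fuji}, any such extremal $w_0$ satisfies \eqref{eq:equation}. Geometrically, this equation encodes the condition that the derivative of $f(e^{i\theta})$ in $\theta$ vanishes, i.e.\ the reflection law at $w_0$. So $w_0$ belongs to the finite set $R$ of roots of \eqref{eq:equation} lying on $\partial\mathbb{B}^2$. This set is nonempty because it contains $w_0$, and it has at most four elements. The quartic is nontrivial: its coefficients are not all zero when $z_1,z_2$ are not both $0$, and the case $z_1=z_2=0$ is covered by the trivial case above.

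Third, the minimum of $f$ over $\partial\mathbb{B}^2$ equals $\min_{w\in R} f(w)$. Indeed, the global minimum lies in $R$, and every point of $R$ is a point of the circle, so its $f$-value is at least the global minimum. A minimizing root, meaning a $w\in R$ attaining $\min_R f$, therefore gives the value $|z_1-z_2|/f(w)=s_{\mathbb{B}^2}(z_1,z_2)$. This holds regardless of non-uniqueness, because all minimizing roots have the same $f$-value.

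The only real subtlety is this: roots of \eqref{eq:equation} off the unit circle, and unimodular roots that are merely critical points (local maxima of $f$), must be excluded. That is exactly why the statement selects the minimizing root among those on $\partial\mathbb{B}^2$, and the third step handles it.
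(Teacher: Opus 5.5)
Your proof is correct and follows the paper's intended route: the paper gives no separate argument for the corollary, treating it as immediate from \eqref{extrpt} (attainment by compactness) and Theorem \ref{thm:Fuji}, with the minimizing root defined as the root at an extremal point, and your third step just confirms that this agrees with choosing the unimodular root of least focal sum. One harmless imprecision: a unimodular $w$ solves \eqref{eq:equation} exactly when $(z_1-w)(z_2-w)/w^2$ is real, and the negative-real case gives critical points of $|z_1-w|-|z_2-w|$ rather than of $f$, so the quartic is only a necessary condition for $f'=0$ --- which is all your argument actually uses.
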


\begin{nonsec}{\bf Testing and validation.}\label{tstvali}
The above method has been applied in astrophysics applications.
Some of the comments read as follows.

(1) From \cite[p.7]{dg1}:
"Furthermore, we compared algorithms at $45^o$ to Fujimura et al.'s
algorithm. We found Mart\'in--Neira and Helm had a systematic discrepancy in terms of grazing angle and arc length, proportional to antenna
heights. Fermat's iterative algorithm presented the least accurate results. Therefore, we advise users to give preference to Fujimura et al.'s
algorithm."

(2) From  \cite[p.3304]{dg2}: "Several formulations were previously tested and we found the model of Fujimura et al.
(2019) to be more numerically stable (Almeida Junior
and Geremia--Nievinski, 2023)."

(3) From \cite{mbm}: "Fujimura et al. (2019) provides a formulation that works well for the method of path--length minimization, but suffers numerical instability in cases where the radius of the sphere is very small relative to source or observer distance (e.g., in the one--finite case discussed below). Glaeser (1999) demonstrates the numerical root--finding approach (using an algorithm from Schwarze 1990) and notes that it is subject to "numerical instabilities that may lead to a considerable loss of accuracy" under certain circumstances."  

 \end{nonsec}
\vspace{3cm}

\section{Quartic equation}


Assume that light rays reach the unit circle from the right side, 
parallel to the real axis.

\begin{figure}[H] %
\centerline{
\includegraphics[width=0.5\linewidth]{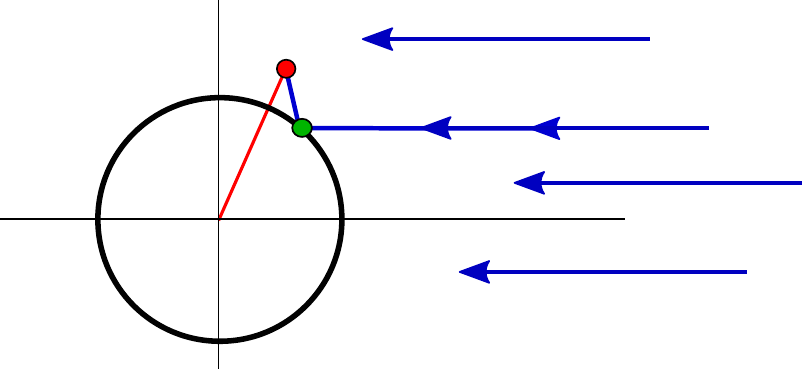}
}
\caption{Light rays reach the unit circle from the right side.}
\label{figTab}
\end{figure}

Let $ f =re^{i\theta} $ be the observation point,
where we can assume $ r>1 $ and $ 0\leq \theta\leq \frac{\pi}2 $
from symmetry.

Let $ w $ be the reflection point and $ \widetilde{w} $
with $ \widetilde{w}=w+1 $.
Note that $ w $ and $ \widetilde{w} $ are points on a line 
parallel to the real axis.

\begin{figure}[H] %
\centerline{
\includegraphics[width=0.4\linewidth]{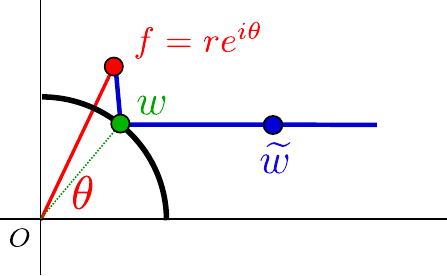}
}
\caption{The light ray reflects at the point $w$ on the unit circle
 and reaches the point $f$.} \label{figTab2}
\end{figure}

\begin{thm}
\label{claim1}
The reflection point $ w $ is given as a solution of the equation
\begin{equation}\label{eq:1}
   re^{-i\theta}w^4-w^3+w-re^{i\theta}=0.
\end{equation}
\end{thm}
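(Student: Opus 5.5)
We derive \eqref{eq:1} directly from the law of reflection: at $w\in\partial\mathbb{B}^2$ the incoming and outgoing rays make equal angles with the normal line through $0$ and $w$. Since the ray travels in the direction $-1$ (leftward, parallel to the real axis), the incoming direction is $\widetilde{w}-w=1$. The reflection condition says that the unit vectors from $w$ toward $\widetilde{w}$ and from $w$ toward $f$ are mirror images in the line $\{tw: t\in\mathbb{R}\}$.

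The first step is to state this in complex form. Reflection in the line through $0$ and $w$, with $|w|=1$, is $\zeta\mapsto w^2\overline{\zeta}$. The condition is then that $w^2\,\overline{(\widetilde{w}-w)}=w^2$ is a positive multiple of $f-w$. Equivalently, the quotient $(f-w)/w^2$ is real and positive.

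The second step is to convert reality into an algebraic identity. The quotient is real exactly when
\[
\frac{f-w}{w^2}=\overline{\left(\frac{f-w}{w^2}\right)}=(\overline{f}-\overline{w})\,w^2 ,
\]
where we used $\overline{w}=1/w$, so that $1/\overline{w}^2=w^2$. Substituting $\overline{w}=1/w$ again gives $f-w=\overline{f}w^4-w^3$, that is,
\[
\overline{f}w^4-w^3+w-f=0 .
\]
With $f=re^{i\theta}$ this is exactly \eqref{eq:1}.

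As a consistency check, the same equation arises as the formal limit of \eqref{eq:equation}. Take $z_1=f$ and $z_2=R\to+\infty$ on the positive real axis, divide by $R$, and let $R\to\infty$; this yields $\overline{f}w^4-w^3+w-f=0$. The exterior-problem remark from \cite{fhmv} justifies applying \eqref{eq:equation} outside the disk.

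The main obstacle is that reality of the quotient also allows it to be negative. Such roots correspond to the ray reflected in the wrong direction, or to points on the dark side of the circle. So \eqref{eq:1} is only a necessary condition. The theorem asks only that $w$ be among the roots, so this suffices, and selecting the physical root is left for the discussion that follows. I would remark that the true reflection point satisfies $\operatorname{Re} w>0$ and $(f-w)/w^2>0$. This distinguishes it from the spurious roots, which include $w=\pm1$-type solutions when $\theta=0$.
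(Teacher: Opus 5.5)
Your proof is correct and follows essentially the paper's argument. The paper writes the reflection law as the equality of arguments $\arg\frac{\widetilde{w}-w}{-w}=\arg\frac{-w}{f-w}$, while you encode it via the reflection $\zeta\mapsto w^2\overline{\zeta}$ in the normal line; both reduce to $(f-w)/w^2$ being (positive) real, and then clear conjugates with $w\overline{w}=1$ to reach \eqref{eq:1}. One slip in your closing side remark: when $\theta=0$ the root $w=1$ is the genuine (retro)reflection point, not a spurious one. The spurious roots there are $w=-1$ (dark side) and the two roots of $rw^2-w+r=0$, where $(f-w)/w^2=-r<0$.
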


\noindent
\begin{proof} \quad From the reflection property, the following holds
$$ \measuredangle(0,w,\widetilde{w})=\measuredangle(f,w,0). $$
Therefore, 
$  \displaystyle \arg \dfrac{\widetilde{w}-w}{0-w} = \arg \dfrac{0-w}{f-w} $
holds and
\begin{align*}
 \arg \dfrac{\widetilde{w}-w}{-w} - \arg \dfrac{-w}{f-w}
  &  = \arg \dfrac{w-\widetilde{w}}{w} \cdot \dfrac{w-f}{w} \\
  & = \arg \dfrac{(w-(1+w))(w-f)}{w^2}
   = \arg \dfrac{f-w}{w^2}=0.
\end{align*}
The last equality implies that $\dfrac{f-w}{w^2} $ is a real number. 
Since this complex conjugate is also real, we have
$$
    \frac{f-w}{w^2}=\frac{\overline{f}-\overline{w}}{\overline{w}^2}.
$$
Hence, we have
$$
  \overline{w}^2(re^{i\theta}-w)=w^2(re^{-i\theta}-\overline{w}).
$$
Since $ w $ is a point on the unit circle and satisfies $w\overline{w}=1$, 
the above equation can be written as
$$
   re^{-i\theta}w^4-w^3+w-re^{i\theta}=0.
$$
\end{proof}

\noindent
\begin{nonsec}{\bf Remark.}\label{rmk2} Cardano's formula gives
the four roots of equation \eqref{eq:1}.
Under the assumptions here, we can choose the root that can 
be written as $ e^{i\varphi} $ ($ 0\leq \varphi \leq \frac{\pi}2 $).
\end{nonsec}

The point $w$ is the point of tangency of the unit circle to 
the parabola whose directrix is perpendicular to the real axis 
and whose focus is $f$.


\begin{figure}[H] %
\centerline{
\includegraphics[width=0.5\linewidth]{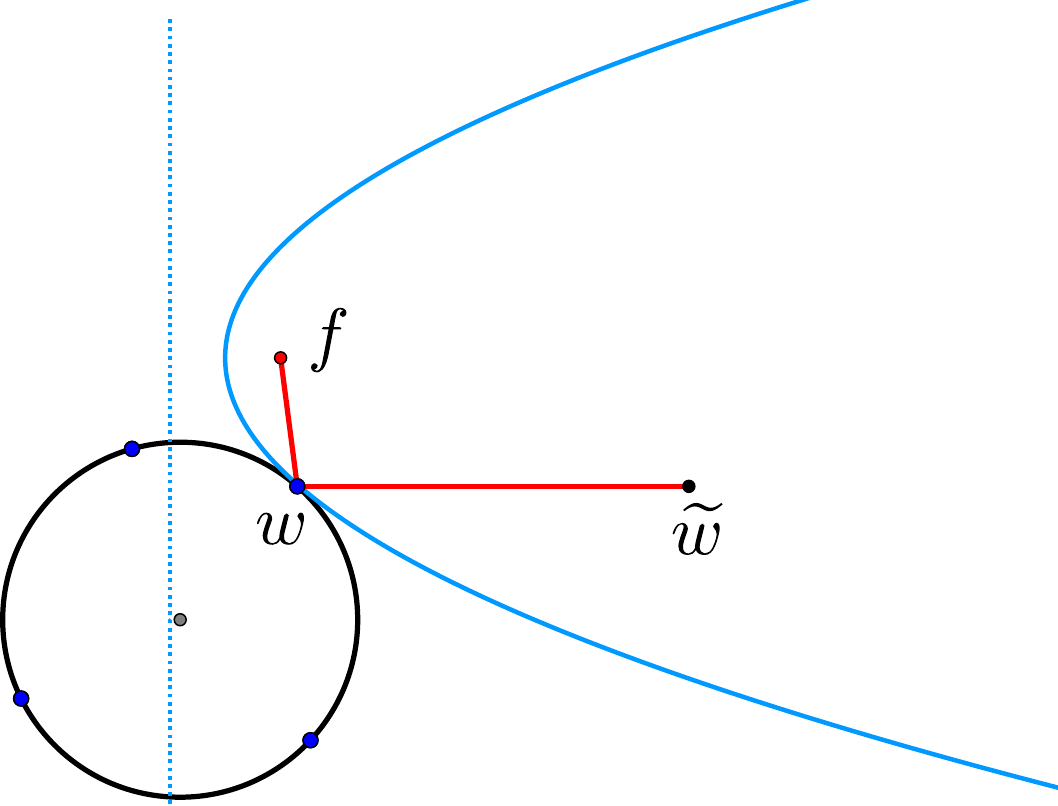}
}
\caption{Tangential parabola and the four roots of
a quartic equation on the unit circle. See Theorem \ref{claim2}.} \label{figTab4}
\end{figure}

\begin{thm}\label{claim2}
For $ r>1 $, the four roots of the equation 
\begin{equation}\label{eq:inf}
  re^{-i\theta}w^4-w^3+w-re^{i\theta}=0
\end{equation}
lie on the unit circle.
\end{thm}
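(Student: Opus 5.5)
We prove that all four roots of \eqref{eq:inf} have modulus one by reducing to a real trigonometric equation and counting its roots. Write $P(w)=re^{-i\theta}w^4-w^3+w-re^{i\theta}$.

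\emph{Self-inversive structure.} First we note that $P$ is self-inversive up to sign:
\[
w^4\,\overline{P(1/\overline{w})}=re^{i\theta}-w+w^3-re^{-i\theta}w^4=-P(w).
\]
So the root set is invariant under $w\mapsto 1/\overline{w}$. This alone does not place the roots on the circle, but it tells us the right substitution.

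\emph{Reduction to a real equation.} For $w=e^{i\varphi}$ we compute
\[
e^{-2i\varphi}P(e^{i\varphi})=r\bigl(e^{i(2\varphi-\theta)}-e^{-i(2\varphi-\theta)}\bigr)-\bigl(e^{i\varphi}-e^{-i\varphi}\bigr)=2i\bigl(r\sin(2\varphi-\theta)-\sin\varphi\bigr).
\]
Hence $e^{i\varphi}$ is a root of $P$ if and only if
\[
g(\varphi):=r\sin(2\varphi-\theta)-\sin\varphi=0 .
\]
It therefore suffices to show that $g$ has four distinct zeros in $[0,2\pi)$. Since $P$ has degree four, these give all four roots, and all lie on the unit circle.

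\emph{Counting zeros of $g$.} We use $r>1$ together with the intermediate value theorem. Consider the four points $\varphi_k=\frac{\theta}{2}+\frac{\pi}{4}+\frac{k\pi}{2}$ for $k=0,1,2,3$. At these points $2\varphi_k-\theta=\frac{\pi}{2}+k\pi$, so
\[
r\sin(2\varphi_k-\theta)=(-1)^k r .
\]
Since $|\sin\varphi_k|\le 1<r$, the sign of $g(\varphi_k)$ is $(-1)^k$. Thus $g$ alternates in sign at four cyclically ordered points around the circle. That gives four sign changes, hence at least four zeros, one in each of the four open arcs between consecutive $\varphi_k$, with the arc from $\varphi_3$ back to $\varphi_0+2\pi$ included. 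The zeros lie in disjoint arcs, so they are distinct points $e^{i\varphi}$.

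\emph{Conclusion.} This yields four distinct roots of $P$ on $\partial\mathbb{B}^2$. Since $\deg P=4$ (the leading coefficient $re^{-i\theta}\neq0$), these are all the roots. The only care needed is the reduction identity in the second step, which requires $|w|=1$ throughout; that identity is routine, so no step poses a real obstacle.
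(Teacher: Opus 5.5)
Your proof is correct, and it follows a genuinely different, much shorter route than the paper.

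\textbf{The paper's route.} It sends the circle to the real line by the Cayley map $z=i\frac{1+w}{1-w}$. It then uses Vieta's formulas to turn \eqref{eq:inf} into the real quartic $r\sin\theta\,z^4+2(2r\cos\theta-1)z^3-6r\sin\theta\,z^2-2(2r\cos\theta-1)z+r\sin\theta=0$. Finally it verifies the classical sign conditions on the discriminant and two auxiliary invariants that guarantee four distinct real roots. This needs an external criterion and a fairly heavy symbolic computation. Moreover, the Cayley map is singular at $w=1$, which is a root exactly when $\sin\theta=0$; then the leading coefficient of the real quartic also vanishes. So the case $\theta=0$ is set aside, supported only by a geometric remark.

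\textbf{Your route.} You restrict to the circle, where $e^{-2i\varphi}$ times the left side of \eqref{eq:inf} at $w=e^{i\varphi}$ equals $2i\,g(\varphi)$ with $g$ real. You then get four sign changes from the single estimate $|\sin\varphi|\le 1<r$. This works uniformly in $\theta$, has no exceptional case, and shows exactly where the hypothesis $r>1$ enters.

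\textbf{A bonus of your route.} It places exactly one root in each arc $(\varphi_k,\varphi_{k+1})$. That gives bisection brackets and settles the root choice in Remark~\ref{rmk2}. The reflected ray leaves $w$ in direction $w^2$, so at a root one needs $\frac{f-w}{w^2}=r\cos(2\varphi-\theta)-\cos\varphi>0$. Since $r^2\cos^2(2\varphi-\theta)=r^2-\sin^2\varphi>\cos^2\varphi$, this quantity has the sign of $\cos(2\varphi-\theta)$. Of the two arcs where that sign is positive, only the one with $|\varphi-\theta/2|<\pi/4$ lies on the lit side $\cos\varphi>0$ when $0\le\theta\le\pi/2$. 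So that root is the reflection point. By contrast, the paper's condition $0\le\varphi\le\pi/2$ is satisfied by two roots whenever $r\sin\theta<1$, because then $g(0)\le 0<g(\varphi_0)$ and $g(\pi/2)=r\sin\theta-1<0$.

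\textbf{Comparison.} The paper's approach does yield an explicit real quartic whose real roots parametrize the reflection points. As a proof of the theorem, however, yours is both simpler and more complete.
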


\noindent \begin{proof}
If \eqref{eq:inf} has $ w=1 $ as its solution, 
then $ \theta=0 $ follows from $ r(e^{-i\theta}-e^{i\theta})=0 $.
In this case, the corresponding parabola degenerates to a 
half-line on the real-axis because the light ray arrives 
directly above the observer.
So from now on we will assume that \eqref{eq:inf}
does not have $w=1$ as its solution.

Consider the following two equations
\begin{equation}\label{eq:D}
  (w-w_1)(w-w_2)(w-w_3)(w-w_4)=0
\end{equation}
and
\begin{equation}\label{eq:R}
  (z-z_1)(z-z_2)(z-z_3)(z-z_4)=0,
\end{equation}
where $ z_k=i\dfrac{1+w_k}{1-w_k} $ ($ k=1,2,3,4 $).
We remark that, for $ k=1,\cdots,4 $,  $ w_k\in\partial \mathbb{B}^2 $
and $ z_k\in\mathbb{R} $ are equivalent.

Suppose that $ w_1,\cdots,w_4 $ are the solutions to \eqref{eq:inf}.
Then, from Vieta's formulas, the following equalities hold
\begin{align} \notag
  &  (w_1+w_2+w_3+w_4)r-e^{i\theta}=0,\\  \notag
  &  w_1(w_2+w_3+w_4)+w_2(w_3+w_4)+w_3w_4=0, \\ \notag
  &  (w_1w_2w_3+w_1w_2w_4+w_1w_3w_4+w_2w_3w_4)r+e^{i\theta}=0, \\ \label{eq:s}
  & w_1w_2w_3w_4+e^{2i\theta}=0.
\end{align}
The equation \eqref{eq:R} can be written as 
\begin{align}\notag
  &(w_1-1)(w_2-1)(w_3-1)(w_4-1)z^4 \\ \notag
  & \quad +2i\big((w_1+w_2+w_3+w_4)-(w_1w_2w_3+w_1w_2w_4+w_1w_3w_4+w_2w_3w_4)\\
     \notag
  & \qquad
        +2w_1w_2w_3w_4-2\big)z^3 \\ \notag
  & \quad +2\big((w_1(w_2+w_3+w_4)+w_2(w_3+w_4)+w_3w_4)-3w_1w_2w_3w_4-3\big)z^2 \\
       \notag
  & \quad +2i\big((w_1+w_2+w_3+w_4)+(w_1w_2w_3+w_1w_2w_4+w_1w_3w_4+w_2w_3w_4)\\
    \notag
  & \qquad
           -2w_1w_2w_3w_4+2\big)z \\  \label{eq:R2}
  & \quad +(w_1+1)(w_2+1)(w_3+1)(w_4+1)=0
\end{align}
and the following hold,
\begin{align*}
 & (w_1-1)(w_2-1)(w_3-1)(w_4-1) \\
 & \quad =1-(w_1+w_2+w_3+w_4)
    +(w_1(w_2+w_3+w_4)+w_2(w_3+w_4)+w_3w_4) \\
 & \qquad   -(w_1w_2w_3+w_1w_2w_4+w_1w_3w_4+w_2w_3w_4)+w_1w_2w_3w_4, \\
 & (w_1+1)(w_2+1)(w_3+1)(w_4+1) \\ 
 & \quad  =1+(w_1+w_2+w_3+w_4)
      +(w_1(w_2+w_3+w_4)+w_2(w_3+w_4)+w_3w_4) \\
 & \qquad   +(w_1w_2w_3+w_1w_2w_4+w_1w_3w_4+w_2w_3w_4)+w_1w_2w_3w_4.
\end{align*}
Substituting \eqref{eq:s} into \eqref{eq:R2}, we have
\begin{align*}
  & (e^{2i\theta}-1)rz^4+4i((e^{2i\theta}+1)r-e^{i\theta})z^3-6(e^{2i\theta}-1)rz^2 \\
  & \qquad
     -4i((e^{2i\theta}+1)r+e^{i\theta})z +(e^{2i\theta}-1)r=0.
\end{align*}
Dividing the above equation by $ 2ie^{i\theta} $ gives,
\begin{equation}\label{eq:R3}
  r\sin \theta z^4+2(2r\cos \theta -1)z^3-6r\sin \theta z^2
    -2(2r\cos\theta -1)z+r\sin\theta =0.
\end{equation}

Note that equation \eqref{eq:R3} is an equation of degree four 
with real coefficients. 
It is known that the equation 
$$  ax^4 + bx^3 + cx^2 + dx + e = 0 \quad (a,b,c,d\in\mathbb{R}) $$
has four real roots if the following conditions are satisfied
(see, for example \cite{Rees}),
$$  \Delta>0,\qquad P<0 \quad\mbox{and}\quad D<0, $$ 
where
\begin{align*}
  \Delta=&
    256e^3a^3+(-192e^2db-128e^2c^2+144ed^2c-27d^4)a^2 \\
    & +\big((144e^2c-6ed^2)b^2+(-80edc^2+18d^3c)b+16ec^4-4d^2c^3\big)a \\
    & -27e^2b^4+(18edc-4d^3)b^3+(-4ec^3+d^2c^2)b^2,\\
  P=& 8ac-3b^2, \\
  D=& 64a^3e-16a^2c^2+16ab^2c-16a^2db-3b^4.
\end{align*}

For $ r>1 $, $ a=r\sin\theta$, $ b=2(2r\cos \theta -1)$, $ c=-6r\sin \theta$,
$d=-2(2r\cos\theta -1)$, and $ e=r\sin\theta$, we have
\begin{align*}
  \Delta & =  256((64\cos^6\theta+192\sin^2\theta\cos^4\theta
          +192\sin^4\theta\cos^2\theta+64\sin^6\theta)r^6\\
      & \quad +(-48\cos^4\theta-96\sin^2\theta\cos^2\theta-48\sin^4\theta)r^4
         +(12\cos^2\theta-15\sin^2\theta)r^2-1)\\
      & = 256(64r^6-48r^4+(-27\sin^2\theta+12)r^2-1)\\
      & >  256(64r^6-48r^4-15r^2-1)>256(64r^6-64r^4)>0, \\
  P   & = -12((4\cos^2\theta+4\sin^2\theta)r^2-4r\cos \theta +1)\\
      & =-12(4r^2-4r\cos \theta +1)
          <-12\big(4r(r-\cos\theta)+1\big)<0,\\
  D   & =-16((48\cos^4\theta+80\sin^2\theta\cos^2\theta+32\sin^4\theta)r^4
        +(-96\cos^3\theta-96\sin^2\theta\cos \theta)r^3 \\
      &\quad  +(72\cos^2\theta+28\sin^2\theta)r^2-24\cos \theta r+3)\\
      & = -16\bigg((4r^2+11)\Big(2r\cos \theta-\dfrac{6(4r^2+1)}{4r^2+11}\Big)^2
             +\dfrac{(4r^2-1)^2(8r^2-3)}{4r^2+11}\bigg) <0.
\end{align*}
Hence, \eqref{eq:R3} has four real roots, and the assertion is obtained.
\end{proof}

\section{Envelope of parabola directrices: tear drop curve }


Let $ a $ ($ a>1 $) be a point on the real axis.
Suppose light arriving from a certain direction is 
reflected at point $ w $  on the unit circle  and reaches the point $ a $.
Since the light rays cannot pass through the interior of 
the unit circle, we assume that 
$ \arg w \leq |\sin^{-1}\frac{\sqrt{a^2-1}}{a}| $.

\begin{figure}[htbp]
  \centerline{\includegraphics[width=0.5\linewidth]{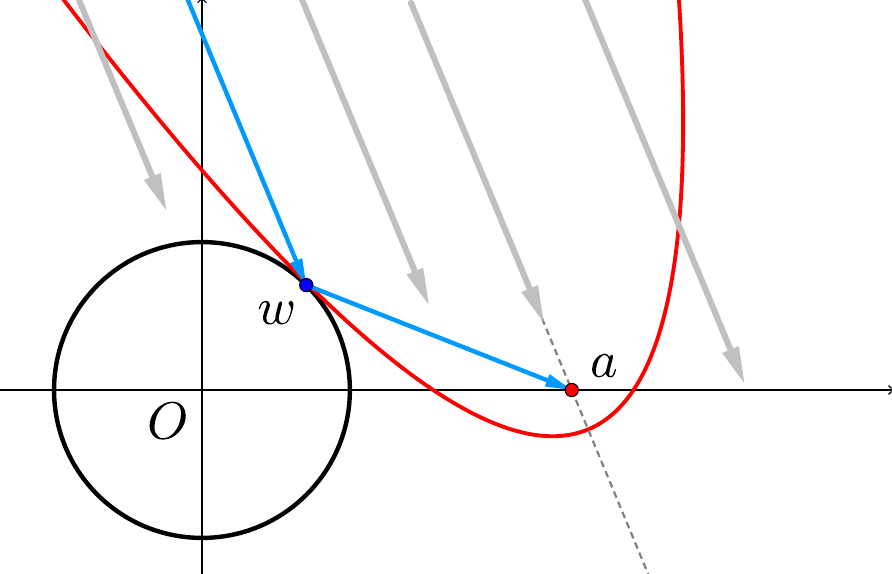}}
  \caption{A light signal is reflected at the point $w$ 
           of the unit circle  and  reaches the point $a$. 
           It comes from an infinitely distant location 
           in a specific direction.}
\end{figure}

\begin{lem}\quad
  The directrix of the parabola, tangent to the unit circle at $ w $ and with focus at $a$, is given by
  \begin{equation}\label{eq:directrix}
     (a-w)z+w^3(wa-1)\overline{z}+2w^2a^2-3w(w^2+1)a+4w^2=0.
  \end{equation}
  \end{lem}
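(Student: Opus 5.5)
The plan is to avoid computing the parabola itself and to use the classical reflection property of a parabola: \emph{the mirror image of the focus in any tangent line lies on the directrix, and it is exactly the foot of the perpendicular dropped from the point of tangency to the directrix}. This holds because the point of tangency $w$ is equidistant from the focus and the directrix, and the tangent line bisects the angle between $wa$ and the perpendicular from $w$ to the directrix. Since the parabola is tangent to the unit circle at $w$, its tangent line at $w$ is the tangent line of the unit circle there, namely $\{z:\ z\overline{w}+\overline{z}w=2\}$.

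First, I compute the reflection in this line. For $|w|=1$ it is $z\mapsto 2w-w^2\overline{z}$, which one checks on the points $w$ and $0\mapsto 2w$. Since $a$ is real, the foot of the perpendicular from $w$ to the directrix is
$$p=2w-w^2a .$$
Second, the directrix passes through $p$ and is perpendicular to the vector
$$p-w=w-w^2a=w(1-wa).$$
So it is the line $\operatorname{Re}\big((z-p)\overline{(p-w)}\big)=0$, that is,
$$(z-p)\overline{w(1-wa)}+(\overline{z}-\overline{p})\,w(1-wa)=0 .$$

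Third, I eliminate conjugates of $w$ using $\overline{w}=1/w$ and $a\in\mathbb{R}$. This gives $\overline{w(1-wa)}=(w-a)/w^2$ and $\overline{p}=(2w-a)/w^2$. Multiplying the line equation by $-w^2$ yields
$$(a-w)z+w^3(wa-1)\overline{z}+C=0,$$
with constant term
$$C=-(a-w)(2w-w^2a)-w(wa-1)(2w-a).$$

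The only real work is expanding $C$. The expansion should give $2a^2w^2-3aw(1+w^2)+4w^2$, which is the constant in \eqref{eq:directrix}. A quick check confirms this: the $a^2w^2$ terms contribute $2a^2w^2$, the $w^2$ terms contribute $4w^2$, and the remaining terms $-2aw-aw-3aw^3$ combine to $-3aw(1+w^2)$.

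I expect the main obstacle to be justifying the reflection property cleanly, in particular the claim that the reflected focus is the foot of the perpendicular from $w$ rather than just some point of the directrix. If needed, I would verify it directly. The point $p$ satisfies $|p-w|=|a-w|$ because reflection is an isometry fixing $w$. Moreover, the line through $p$ perpendicular to $p-w$ has distance $|p-w|$ from $w$, so $w$ lies on the parabola with focus $a$ and this directrix. Finally, the tangent line of that parabola at $w$ bisects the angle $\angle(a,w,p)$, which is the tangent line of the circle by construction.
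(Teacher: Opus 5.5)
Your proposal is correct and follows essentially the same route as the paper. The paper also reflects the focus $a$ in the tangent line $z+w^2\overline{z}=2w$ to get $a^{\ast}=w(2-aw)$ (your $p$), then takes the line through $a^{\ast}$ perpendicular to $a^{\ast}-w$ and clears conjugates with $\overline{w}=1/w$. The only differences are that you obtain $a^{\ast}$ directly from the reflection formula $z\mapsto 2w-w^2\overline{z}$ instead of matching coefficients with the perpendicular-bisector equation, and that you justify the focus-reflection property of the parabola, which the paper uses without comment.
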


\begin{nonsec}{\bf Remark.}
\eqref{eq:directrix} can also be written as,
 $$ 
     (a\overline{w}^2-\overline{w})z+(aw^2-w)\overline{z}+2a^2
       -3(w+\overline{w})a+4=0.
 $$
\end{nonsec}

\begin{proof}
The line $ l_w $ tangent to the unit circle at point $ w $ is given by,
\begin{equation}\label{eq:tangent}
  l_w \,:\, \ L_w(z)=z+w^2\overline{z}-2w=0.
\end{equation}
Let $ a^{\ast} $ be the symmetric point of a point $ a $
with respect to the line $ l_w $.
Therefore, the perpendicular bisector of line segment $ [a,a^{\ast}] $
coincides with $ l_w $.
Here, the perpendicular bisector of line segment $ [a,a^{\ast}] $ is given by
$ |z-a|=|z-a^{\ast}| $, i.e.
$$
    PB(z)=(a-\overline{a^{\ast}})z+(a-a^{\ast})\overline{z}
       -(a^2-|a^{\ast}|^2)=0.
$$
Since the above equation is equivalent to \eqref{eq:tangent},
and hence       
$ (a-\overline{a^{\ast}})L_w(z)-PB(z)\equiv0 $ holds.
Comparing the coefficients, we have
$$
    (-w^2+1)a-a^{\ast}+\overline{a^{\ast}}w^2=0,
     \quad\textrm{and}\quad
    -a^2+2wa+a^{\ast}\overline{a^{\ast}}-2\overline{a^{\ast}}w=0.
$$
Hence we have, $ a^{\ast}=w(2-aw)\,.$

Here, the directrix is perpendicular to the line passing through 
$ a^{\ast},  w $ and passes through $ a^{\ast} $, 
so the equation is given by
$$
   (\overline{w}-\overline{a^{\ast}})(z-a^{\ast})
    +(w-a^{\ast})(\overline{z}-\overline{a^{\ast}})=0.
$$
Substituting $ a^{\ast}=w(2-aw)$ into the above equation, we have
$$
     (a-w)z+w^3(wa-1)\overline{z}+2w^2a^2-3w(w^2+1)a+4w^2=0.
$$
\end{proof}

\begin{figure}[htbp]
  \centerline{\includegraphics[width=0.5\linewidth]{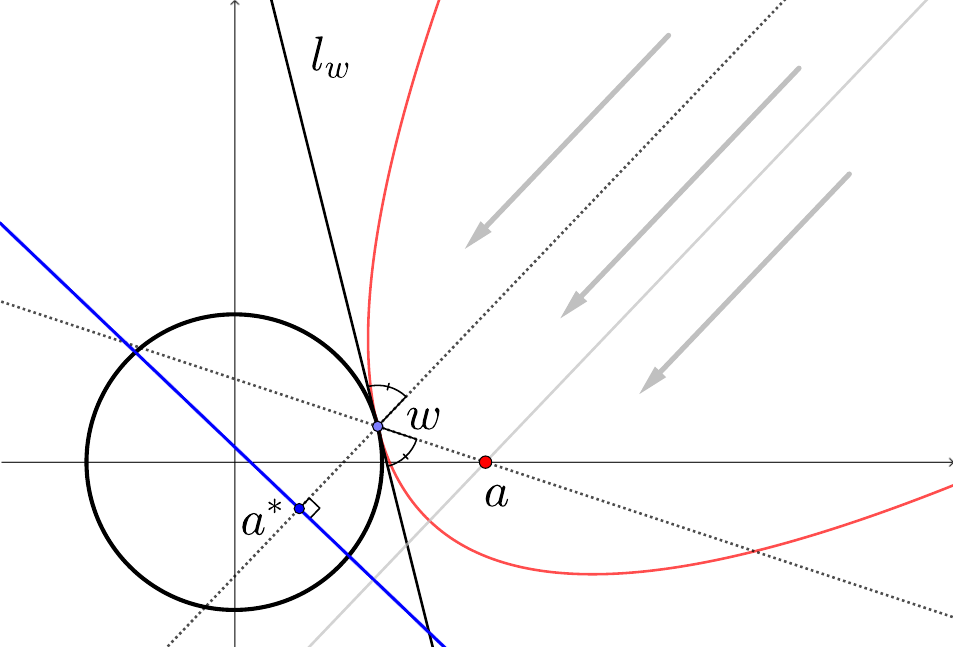}}
  \caption{The blue line indicates the directrix with respect 
           to the red parabola.}
\end{figure}

\begin{prop}
When $ a>1 $ is fixed and the direction of the light source changes, 
 that is, when $ w $ ranges over the unit circle, 
 the envelope of the family of directrices is given by the following equation
 \begin{equation}\label{eq:envelope}
   z^2\overline{z}^2-2(a^2+2)z\overline{z}+4a(z+\overline{z})+a^4-4a^2=0.
 \end{equation}
\end{prop}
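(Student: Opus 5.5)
Parametrize the directrix family by the real angle $t$ with $w=e^{it}$, and compute the envelope by the standard rule: eliminate $t$ between the line equation and its derivative in $t$. I will use the symmetric form of the Remark,
\[
F(z,w)=(a\overline{w}^2-\overline{w})z+(aw^2-w)\overline{z}+2a^2-3(w+\overline{w})a+4=0 ,
\]
which is real-valued for $|w|=1$. With $\overline{w}=1/w$ and $dw/dt=iw$, the envelope condition $\partial F/\partial t=0$ becomes, after dividing by $i$,
\[
G(z,w)=(-2a\overline{w}^2+\overline{w})z+(2aw^2-w)\overline{z}-3a(w-\overline{w})=0 .
\]
Thus $F$ and $G$ are two (real) linear equations in $z,\overline{z}$.

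Next I solve the linear system $F=G=0$ for $z$ as a function of $w$, using Cramer's rule in the unknowns $z,\overline{z}$. The determinant is
\[
(a\overline{w}^2-\overline{w})(2aw^2-w)-(aw^2-w)(-2a\overline{w}^2+\overline{w}).
\]
This should simplify to a real multiple of $(a-\cos t)$-type expressions. It vanishes only at degenerate directions, which I will treat separately or dismiss by continuity. Solving gives an explicit parametrization $z=z(w)$, a rational function of $w$. I expect something like $z=\alpha(w)+\beta(w)$ with $\alpha,\beta$ low-degree Laurent polynomials in $w$, since the coefficients in $F$ and $G$ are quadratic in $w^{\pm1}$. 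One can sanity-check the result at $w=1$: the directrix is vertical there, and its envelope point should lie on the real axis.

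The final step is to verify that $z(w)$ satisfies \eqref{eq:envelope}. Substitute $z=z(w)$ and $\overline{z}=\overline{z(w)}=z(1/w)$ into the left side of \eqref{eq:envelope}, clear denominators, and check that the resulting Laurent polynomial in $w$ vanishes identically.

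An alternative that avoids computing $z(w)$ is to take the resultant of $wF$ and $wG$, viewed as polynomials in $w$ after multiplying through by powers of $w$, treating $z$ and $\overline{z}$ as independent variables. This resultant should factor as a power of the stated quartic, possibly times spurious factors such as powers of $z\overline{z}$ or of $a$.

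The main obstacle is this elimination, and in particular recognizing that the resultant (or the substituted expression) factors exactly as \eqref{eq:envelope} rather than some other quartic. The parametric route keeps the algebra manageable, so I would try it first. The degree of \eqref{eq:envelope} in $z,\overline{z}$ matches that of $|z(w)|^2$-type expressions, which indicates that the parametrization should have the form $z=pw^{-1}+q+\dots$ with simple coefficients in $a$.
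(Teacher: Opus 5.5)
Your plan is correct, but its main line differs from the paper's proof.

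\textbf{The paper's route.} The paper treats $w$ as a free complex unknown and eliminates it from the polynomial form of the directrix equation and its $w$-derivative. Because that elimination ignores the constraint $|w|=1$, it yields $e_1e_2=0$. The extraneous quadratic factor $e_1$ must then be discarded by hand: for $a>1$ its real locus is the single point $x=9a/(8a^2+1)$, $y=0$. This factor, not powers of $z\overline{z}$ or of $a$, is the spurious factor your resultant alternative would run into.

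\textbf{Your route.} Keeping $w=e^{it}$ on the circle and solving the real-linear system avoids extraneous factors entirely, and it is cleaner than you anticipate. The determinant is $4a^2+2-3a(w+\overline{w})=2(2a^2+1-3a\cos t)\ge 2(2a-1)(a-1)>0$, so there are no degenerate directions. Cramer's rule gives the polynomial $z=2w-aw^2$, which is the reflection $a^{\ast}$ of $a$ in the tangent line; there is no $w^{-1}$ term. The verification is then immediate from $z\overline{z}=a^2+4-2a(w+\overline{w})$ and $z+\overline{z}=2(w+\overline{w})-a\big((w+\overline{w})^2-2\big)$. Substituting these into the quartic, the constant term and the coefficients of $w+\overline{w}$ and $(w+\overline{w})^2$ all vanish. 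Alternatively, $z-a=(2-2a\cos t)e^{it}$ is the lima\c{c}on $\rho=2-2a\cos\phi$ about $a$. This form also shows that the parametrization sweeps out the entire real zero set of the quartic, not just part of it.

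\textbf{Comparison.} Your approach gives a hand-checkable proof with no extraneous components, and it produces the tangency-point parametrization that the paper only derives after the proposition. The paper's elimination has the advantage of generating the implicit equation without knowing it in advance.
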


\begin{proof}\quad
The envelope of the family of lines $\{l_w(z)\}_w$
is obtained by eliminating $ w $  from 
$$ 
  L_w(z)=0 \quad\mbox{and}\quad \frac{\partial}{\partial w}L_w(z)=0.
$$
Eliminating $ w $ from the above equations, we have $ e_1\cdot e_2=0$,
where
\begin{align*}
  E_1\, : \ & e_1=27a^2z^2+(-256a^6+192a^4+6a^2+4)z\overline{z}
        +27a^2\overline{z}^2\\
      & \quad
           +(288a^5-252a^3-36a)z+(288a^5-252a^3-36a)\overline{z}
           -324a^4+324a^2,\\
  E_2\,:\  &e_2= z^2\overline{z}^2+(-2a^2-4)z\overline{z}
            +4az+4a\overline{z}+a^4-4a^2.
\end{align*}
If we set $ z=x+iy $, $ E_1 $ is written as
$$
   (a^2-1)\big((8a^2+1)x-9a\big)^2+(4a^2-1)^3y^2=0.
$$
So, $ E_1 $ does not represent the envelope, and the equation of 
the envelope is given by $ E_2\,.$
\end{proof}

\begin{nonsec}{\bf Remark.}\quad
If we set $ z=x+iy $, $ E_2 $ is written as
\begin{equation}\label{eq:limacon}
    \big((x-a)^2+y^2+2a(x-a)\big)^2-4\big((x-a)^2+y^2\big)=0.
\end{equation}
Therefore, the envelope is the lima\c{c}on of Pascal \cite[Chapter 5]{l}.
The standard form of the lima\c{c}on of Pascal is
$  (x^2+y^2-px)^2-q^2(x^2+y^2)=0 $.
It is known that  this curve contains an internal loop for $ q<p $.
So, the curve defined by \eqref{eq:limacon} always contains
a loop, "a tear drop".
\end{nonsec}

Eliminating $\overline{z} $ from \eqref{eq:directrix} and \eqref{eq:envelope},
we have the following point of tangency between
the envelope and the directrix
$$
   z=2w-aw^2.
$$
Setting $ w=e^{i\theta} $ gives the parametric representation of the envelope,
$$
  E_2\,:\ z=2e^{i\theta}-ae^{2i\theta} \quad (-\pi<\theta \leq \pi).
$$
Using the expression for $ E_2 $ above, we can see that
the teardrop-shaped part of this curve corresponds to the subfamily
$ \{l_w\}_w $, where $ \theta=\arg w \leq |\sin^{-1}\frac{\sqrt{a^2-1}}{a}| $.

\begin{figure}[htbp]
\centerline{\includegraphics[width=0.45\linewidth]{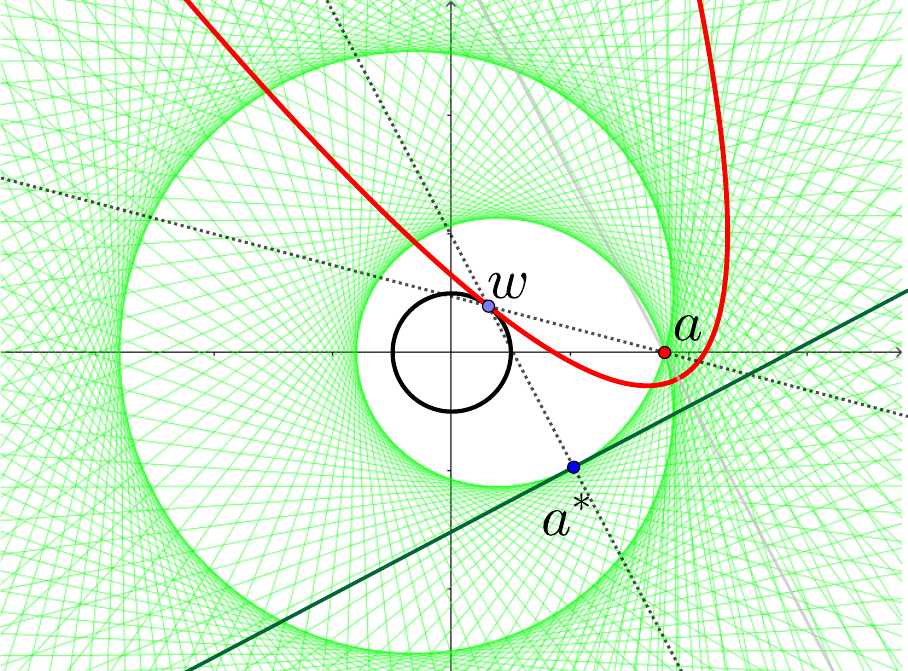}
            \quad
            \includegraphics[width=0.45\linewidth]{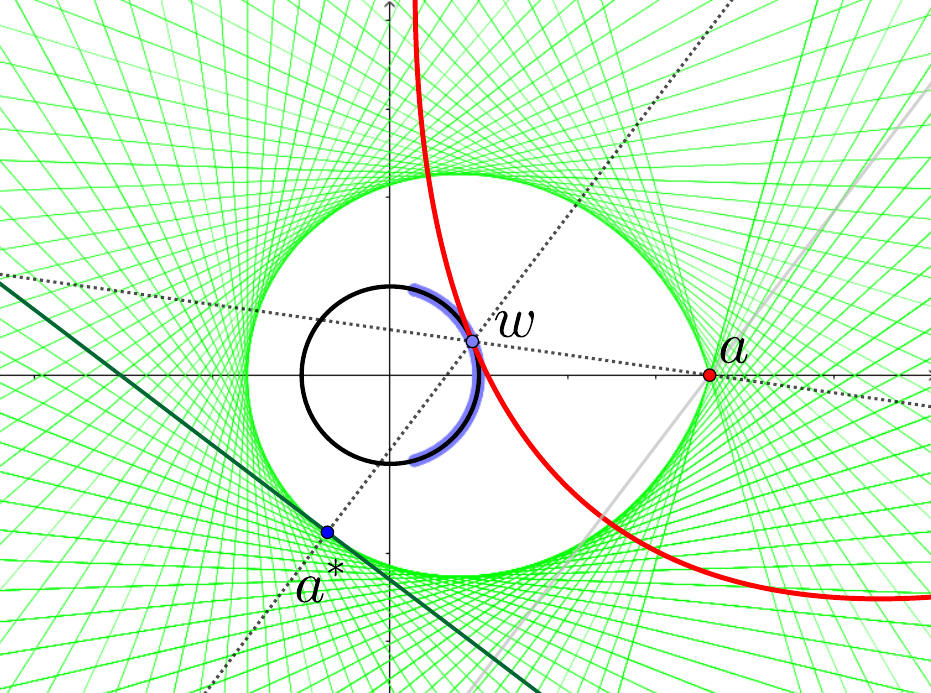}}
\caption{The lima\c{c}on curve as an envelope.}
\end{figure}

The lima\c{c}on of Pascal  can be also drawn as the pedal curve of
a circle and a fixed point $ P $.
Figure \ref{fig:lim} shows the envelope \eqref{eq:envelope}
and the lima\c{c}on curve as the pedal curve of the unit circle
and the fixed point $ a=2 $.

\begin{figure}[htbp]
\centerline{\includegraphics[width=0.5\linewidth]{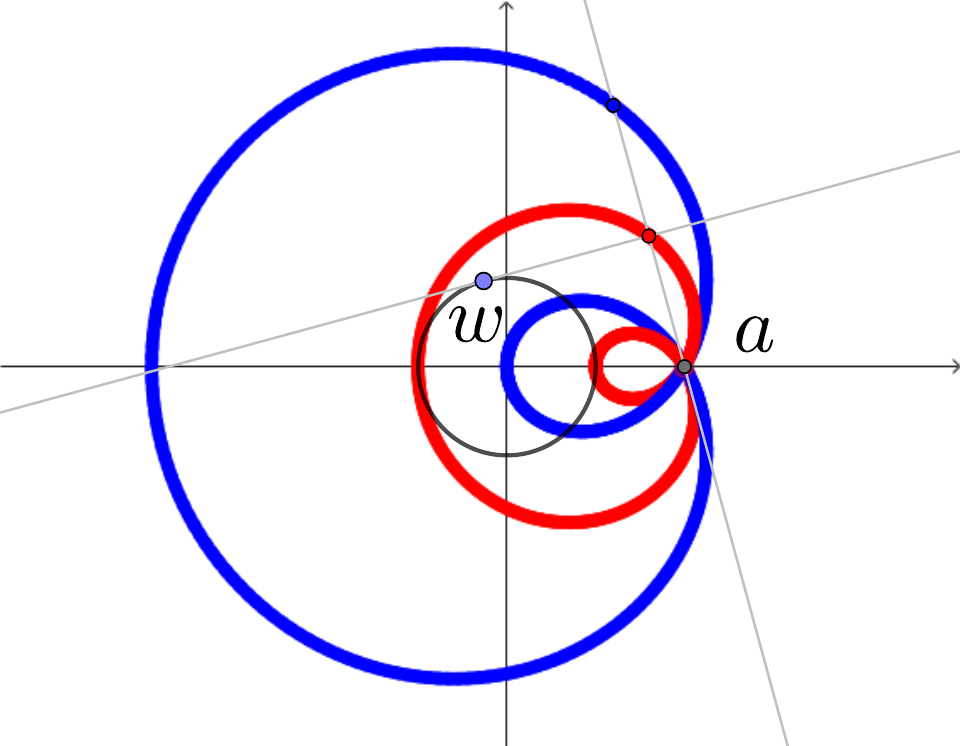}}
\caption{The lima\c{c}on curves:
  The red curve is the lima\c{c}on of Pascal as the pedal of the unit
  circle and the point $ a=2 $.
  The blue curve indicates the envelope of the directrics.}
\label{fig:lim}
\end{figure}




\subsection*{Acknowledgments.}
We are indebted to Dr. 
Vitor Hugo de Almeida Jr. for suggesting in \cite{d} this problem with source at infinity.


\medskip

\bibliographystyle{siamplain}

\end{document}